\definecolor{webgreen}{rgb}{0,.5,0}
\definecolor{webbrown}{rgb}{.6,0,0}
\begin{document}

\theoremstyle{plain}
\newtheorem{theorem}{Theorem}
\newtheorem{remark}{Remark}
\newtheorem{lemma}{Lemma}
\newtheorem{definition}{Definition}
\newtheorem{proposition}{Proposition}
\newtheorem{corollary}{Corollary}

\begin{center}
\vskip 1cm{\Large\bf Sums of powers of integers and generalized\\
\vskip .08in Stirling numbers of the second kind}
\vskip .2in \large Jos\'{e} Luis Cereceda \\
{\normalsize Collado Villalba, 28400 (Madrid), Spain} \\
\href{mailto:jl.cereceda@movistar.es}{\normalsize{\tt jl.cereceda@movistar.es}}
\end{center}

\begin{abstract}
By applying the Newton-Gregory expansion to the polynomial associated with the sum of powers of integers $S_k(n) = 1^k + 2^k + \cdots + n^k$, we derive a couple of infinite families of explicit formulas for $S_k(n)$. One of the families involves the $r$-Stirling numbers of the second kind $\genfrac{\{}{\}}{0pt}{}{k}{j}_r$, $j=0,1,\ldots,k$, while the other involves their duals $\genfrac{\{}{\}}{0pt}{}{k}{j}_{-r}$, with both families of formulas being indexed by the non-negative integer $r$. As a by-product, we obtain three additional formulas for $S_k(n)$ involving the numbers $\genfrac{\{}{\}}{0pt}{}{k}{j}_{n+m}$, $\genfrac{\{}{\}}{0pt}{}{k}{j}_{n-m}$ (where $m$ is any given non-negative integer), and $\genfrac{\{}{\}}{0pt}{}{k}{j}_{k-j}$, respectively. Moreover, we provide a formula for the Bernoulli polynomials $B_k(x-1)$ in terms of $\genfrac{\{}{\}}{0pt}{}{k}{j}_{x}$ and the harmonic numbers.
\end{abstract}

\section{Introduction}

Following Broder \cite[Equation 57]{broder} (see also Carlitz \cite[Equation (3.2)]{carlitz}) we define the generalized (or weighted) Stirling numbers of the second kind by
\begin{equation*}\label{rsp}
R_{k,j}(x) = \sum_{i=0}^{k-j} \binom{k}{i} \genfrac{\{}{\}}{0pt}{}{k-i}{j} x^i, \quad \text{integers}\,\, 0 \leq j \leq k,
\end{equation*}
where $x$ stands for any arbitrary real or complex value, and where the $\genfrac{\{}{\}}{0pt}{}{k}{j}$'s are the ordinary Stirling numbers of the second kind \cite{boya}. Note that $R_{k,j}(x)$ is a polynomial in $x$ of degree $k-j$ with leading coefficient $\binom{k}{j}$ and constant term $R_{k,j}(0) = \genfrac{\{}{\}}{0pt}{}{k}{j}$. Furthermore, we have that $R_{k,j}(1) = \genfrac{\{}{\}}{0pt}{}{k+1}{j+1}$. In general, when $x$ is the non-negative integer $r$, $R_{k,j}(r)$ becomes the $r$-Stirling number of the second kind $\genfrac{\{}{\}} {0pt}{}{k+r}{j+r}_r$ \cite{broder}. A combinatorial interpretation of the polynomial $R_{k,j}(x)$ is given in \cite[Theorem 27]{broder} (see also the definition provided by B\'{e}nyi and Matsusaka in \cite[Definition 2.13]{matsu}).

For convenience and notational simplicity, in this paper we employ the notation $\genfrac{\{}{\}} {0pt}{}{k}{j}_r$ to refer to Broder's $r$-Stirling numbers of the second kind $\genfrac{\{}{\}} {0pt}{}{k+r}{j+r}_r$. The former notation has been used recently by Ma and Wang in \cite{ma} (see also \cite{matsu} and \cite{merris}). The numbers $\genfrac{\{}{\}} {0pt}{}{k}{j}_r$ are then given by
\begin{equation*}
\genfrac{\{}{\}}{0pt}{}{k}{j}_r = \sum_{i=0}^{k-j} \binom{k}{i} \genfrac{\{}{\}}{0pt}{}{k-i}{j} r^i,
\quad \text{integer} \,\, r \geq 0.
\end{equation*}
Likewise, adopting the notation in \cite{ma}, we define the counterpart or dual of $\genfrac{\{}{\}} {0pt}{}{k}{j}_r$ for negative integer $r$ as
\begin{equation*}
\genfrac{\{}{\}}{0pt}{}{k}{j}_{-r} = \sum_{i=0}^{k-j} (-1)^i \binom{k}{i} \genfrac{\{}{\}}{0pt}{}{k-i}{j} r^i,
\quad \text{integer} \,\, r \geq 0.
\end{equation*}
Alternatively, $\genfrac{\{}{\}} {0pt}{}{k}{j}_r$ and $\genfrac{\{}{\}} {0pt}{}{k}{j}_{-r}$ can be equivalently expressed in the form
\begin{align}
\genfrac{\{}{\}}{0pt}{}{k}{j}_r & = \frac{1}{j!} \sum_{i=0}^{j} (-1)^{j-i} \binom{j}{i} (i+r)^k,
\quad \text{integer} \,\, r \geq 0, \label{rsn} \\
\intertext{and}
\genfrac{\{}{\}}{0pt}{}{k}{j}_{-r} & = \frac{1}{j!} \sum_{i=0}^{j} (-1)^{j-i} \binom{j}{i} (i-r)^k,
\quad \text{integer} \,\, r \geq 0,  \label{dsn}
\end{align}
respectively. Clearly, both $\genfrac{\{}{\}}{0pt}{}{k}{j}_r$ and $\genfrac{\{}{\}}{0pt}{}{k}{j}_{-r}$ reduce to $\genfrac{\{}{\}}{0pt}{}{k}{j}$ when $r=0$. It is to be noted that the numbers $\genfrac{\{}{\}}{0pt}{}{k}{j}_{-r}$ were introduced and studied by Koutras under the name of non-central Stirling numbers of the second kind and denoted by $S_r(k,j)$ (see \cite[Equations (2.5) and (2.6)]{koutras}).

On the other hand, for non-negative integer $k$, let $S_k(n)$ denote the sum of $k$-th powers of the first $n$ positive integers
\begin{equation*}
S_k(n) = 1^k + 2^k + \cdots + n^k,
\end{equation*}
with $S_k(0) =0$ for all $k$. As is well known, $S_k(n)$ can be expressed in terms of the Stirling numbers of the second kind as (see, e.g., \cite{shirali})
\begin{equation}\label{f1}
S_k(n) = -\delta_{k,0} + \sum_{j=0}^{k} j! \binom{n+1}{j+1} \genfrac{\{}{\}}{0pt}{}{k}{j},
\end{equation}
where $\delta_{k,0}$ is the Kronecker delta, which ensures that $S_0(n) = n$. Additionally, $S_k(n)$ admits the following variant of \eqref{f1}:
\begin{equation}\label{f2}
S_k(n) = \sum_{j=1}^{k+1} (j-1)! \binom{n}{j} \genfrac{\{}{\}}{0pt}{}{k+1}{j} = \sum_{j=0}^{k} j! \binom{n}{j+1} \genfrac{\{}{\}}{0pt}{}{k+1}{j+1},
\end{equation}
(see, e.g., \cite[Equation (9)]{witula}, \cite{cere}, \cite[Corollary 2]{kargin}, and \cite[Theorem 5]{chrysafi}). The first expression in \eqref{f2} can be readily obtained from the exponential generating function \cite[Equation (11)]{boya2}
\begin{equation*}
\sum_{n=1}^{\infty} (1^k + 2^k + \cdots + n^k) \frac{x^n}{n!} = e^x \sum_{j=1}^{k+1} \frac{1}{j}
\genfrac{\{}{\}}{0pt}{}{k+1}{j} x^j.
\end{equation*}
Of course, \eqref{f1} and \eqref{f2} are equivalent formulas. Indeed, it is a simple exercise to convert \eqref{f1} into \eqref{f2}, and vice versa, by means of the recursion $\genfrac{\{}{\}}{0pt}{}{k}{j} = j \genfrac{\{}{\}}{0pt}{}{k-1}{j} + \genfrac{\{}{\}}{0pt}{}{k-1}{j-1}$ and the well-known combinatorial identity $\binom{n}{j+1} + \binom{n}{j} = \binom{n+1}{j+1}$.

Incidentally, it is worthwhile to mention that, in his 1928 Monthly article \cite{ginsburg}, Ginsburg wrote down explicitly the first few instances of \eqref{f2} for $k =2,3,4,5$ in terms of the binomial coefficients $\binom{n}{j+1}$, where $j =0,1,\ldots,k$, namely
\begin{align*}
S_2(n) & = \binom{n}{1} + 3\binom{n}{2} + 2\binom{n}{3}, \\
S_3(n) & = \binom{n}{1} + 7\binom{n}{2} + 12\binom{n}{3} + 6\binom{n}{4}, \\
S_4(n) & = \binom{n}{1} + 15\binom{n}{2} + 50\binom{n}{3} + 60\binom{n}{4} + 24\binom{n}{5}, \\
S_5(n) & = \binom{n}{1} + 31\binom{n}{2} + 180\binom{n}{3} + 390\binom{n}{4} + 360\binom{n}{5} + 120\binom{n}{6}.
\end{align*}
As noted by Ginsburg, the above formulas appeared on page 88 of the book by Schwatt, {\it Introduction to Operations with Series\/} (Philadelphia, The Press of the University of Pennsylvania, 1924).

In this paper, we obtain a unifying formula for $S_k(n)$ giving \eqref{f1} and \eqref{f2} as particular cases. Indeed, we derive a couple of infinite families of explicit formulas for $S_k(n)$, one of them involving the numbers $\genfrac{\{}{\}}{0pt}{}{k}{j}_r$ and the other the numbers $\genfrac{\{}{\}}{0pt}{}{k}{j}_{-r}$, with $j=0,1,\ldots,k$. Specifically, we establish the following theorem which constitutes the main result of this paper.

\begin{theorem}\label{th:1}
Let $k$ and $n$ be any non-negative integers and let $\genfrac{\{}{\}}{0pt}{}{k}{j}_r$ and $\genfrac{\{}{\}}{0pt}{}{k}{j}_{-r}$ be the numbers defined in \eqref{rsn} and \eqref{dsn}, respectively, where $r$ stands for any arbitrary but fixed non-negative integer. Then
\begin{align}
S_k(n) & = \sum_{j=0}^{k} j! \left[\binom{n+1-r}{j+1} + (-1)^j \binom{r+j-1}{j+1} \right]
\genfrac{\{}{\}}{0pt}{}{k}{j}_r,  \label{th1}  \\[-4mm]
\intertext{and}
S_k(n) & = \sum_{j=0}^{k} j! \left[\binom{n+1+r}{j+1} - \binom{r+1}{j+1} \right]
\genfrac{\{}{\}}{0pt}{}{k}{j}_{-r}.  \label{th2}
\end{align}
\end{theorem}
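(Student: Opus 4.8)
The plan is to apply the Newton--Gregory forward-difference expansion to the polynomial that represents $S_k(n)$ and then to recognize the finite differences that appear as the generalized Stirling numbers, via \eqref{rsn} and \eqref{dsn}. Since $S_k(n)$ agrees, as a function of $n$, with a polynomial of degree $k+1$, for every integer $a$ one has the exact identity
\begin{equation*}
S_k(n) = \sum_{j=0}^{k+1} \binom{n-a}{j}\, \Delta^{j} S_k(a),
\end{equation*}
where $\Delta f(x)=f(x+1)-f(x)$ and $\binom{n-a}{j}$ is understood as the usual polynomial in $n$. The starting observation is the elementary relation $\Delta S_k(n)=S_k(n+1)-S_k(n)=(n+1)^k$, whence for $j\ge 1$
\begin{equation*}
\Delta^{j} S_k(a) = \sum_{i=0}^{j-1} (-1)^{j-1-i}\binom{j-1}{i}(a+1+i)^k ,
\end{equation*}
the right-hand side being $\Delta^{j-1}$ applied to $(x+1)^k$ and then evaluated at $x=a$.

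For \eqref{th1} I would choose $a=r-1$. Comparing the last display with \eqref{rsn} (with $j-1$ and $r$ playing the roles of $j$ and $r$) yields $\Delta^{j}S_k(r-1)=(j-1)!\,\genfrac{\{}{\}}{0pt}{}{k}{j-1}_r$ for $j\ge 1$, while the $j=0$ term of the expansion is simply $S_k(r-1)$. Performing the shift $j\mapsto j+1$ in the remaining sum, which then runs over $j=0,\dots,k$ (recall that $\genfrac{\{}{\}}{0pt}{}{k}{j}_r=0$ once $j>k$), the Newton--Gregory identity takes the form of the polynomial identity
\begin{equation*}
S_k(n) = S_k(r-1) + \sum_{j=0}^{k} j!\,\binom{n-r+1}{j+1}\,\genfrac{\{}{\}}{0pt}{}{k}{j}_r .
\end{equation*}
The constant $S_k(r-1)$ is then pinned down by the key trick of specializing this polynomial identity at $n=0$: since $S_k(0)=0$, it forces $S_k(r-1)=-\sum_{j=0}^{k} j!\,\binom{1-r}{j+1}\,\genfrac{\{}{\}}{0pt}{}{k}{j}_r$. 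Substituting this back and using the elementary binomial identity $-\binom{1-r}{j+1}=(-1)^{j}\binom{r+j-1}{j+1}$, which follows by pulling a factor $-1$ out of each of the $j+1$ numerator factors of $\binom{1-r}{j+1}$, produces \eqref{th1}.

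Formula \eqref{th2} follows the same route, now centered at $a=-r-1$, so that $a+1=-r$. This time the finite difference $\Delta^{j}S_k(-r-1)=\sum_{i=0}^{j-1}(-1)^{j-1-i}\binom{j-1}{i}(i-r)^k$ is, by \eqref{dsn} (again with $j-1$ in place of $j$), equal to $(j-1)!\,\genfrac{\{}{\}}{0pt}{}{k}{j-1}_{-r}$. The same manipulation gives $S_k(n)=S_k(-r-1)+\sum_{j=0}^{k} j!\,\binom{n+r+1}{j+1}\,\genfrac{\{}{\}}{0pt}{}{k}{j}_{-r}$; setting $n=0$ and using $S_k(0)=0$ identifies $S_k(-r-1)=-\sum_{j=0}^{k} j!\,\binom{r+1}{j+1}\,\genfrac{\{}{\}}{0pt}{}{k}{j}_{-r}$, and substituting back yields \eqref{th2} at once, with no further binomial manipulation required.

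The only delicate points I anticipate are bookkeeping ones: aligning the finite difference of $(x+1)^k$, respectively of $(x-r)^k$, with the defining sums \eqref{rsn} and \eqref{dsn} under the index shift $j\mapsto j-1$; checking that the degenerate case $r=0$ (where $a=-1$) is still covered, which it is since $\genfrac{\{}{\}}{0pt}{}{k}{j}_0=\genfrac{\{}{\}}{0pt}{}{k}{j}$; and keeping the range of summation honest. As consistency checks I would confirm that \eqref{th1} and \eqref{th2} both reduce to \eqref{f1} at $r=0$ and that \eqref{th1} reduces to \eqref{f2} at $r=1$, thereby recovering the two classical formulas as the announced special cases.
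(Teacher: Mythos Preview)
Your proposal is correct and follows essentially the same approach as the paper: apply the Newton--Gregory forward-difference expansion to the polynomial $S_k$, identify the higher differences $\Delta^{j}S_k$ with the generalized Stirling numbers via \eqref{rsn}/\eqref{dsn}, and then pin down the constant term by specializing the resulting polynomial identity at $n=0$. The only cosmetic difference is in the derivation of \eqref{th2}: the paper reaches it by substituting $r\to -r$ into the intermediate identity and invoking the symmetry relation $S_k(-r-1)=-\delta_{k,0}+(-1)^{k+1}S_k(r)$ before setting $n=0$, whereas you center the Newton--Gregory expansion directly at $a=-r-1$ and let the $n=0$ evaluation do all the work---a slightly more self-contained variant of the same argument.
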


Before we prove Theorem \ref{th:1} in the next section, a few observations are in order.

\begin{remark}
It is easily seen that both \eqref{th1} and \eqref{th2} reduce to \eqref{f1} when $r=0$. Furthermore, \eqref{th1} reduces to \eqref{f2} when $r=1$. Moreover, setting $r =n$ in \eqref{th1} leads to
\begin{equation}\label{reqn}
S_k(n) = n^{k+1} + \sum_{j=1}^{k} (-1)^j j! \binom{n+j-1}{j+1} \genfrac{\{}{\}}{0pt}{}{k}{j}_{n}.
\end{equation}
Similarly, setting $r=n+1$ in \eqref{th1} yields
\begin{equation}\label{reqn1}
S_k(n) = \sum_{j=0}^{k} (-1)^j j! \binom{n+j}{j+1} \genfrac{\{}{\}}{0pt}{}{k}{j}_{n+1},
\end{equation}
retrieving the result obtained in \cite[Equation (4.8)]{kargin2}.
\end{remark}

\begin{remark}
It should be stressed that both \eqref{th1} and \eqref{th2} hold irrespective of the value taken by the non-negative integer parameter $r$. This means that, actually, the right-hand side of \eqref{th1} and \eqref{th2} provides us with an infinite supply of explicit formulas for $S_k(n)$, one for each value of $r$. For example, for $r =2$, and noting that $S_k(1) =1$ for all $k$, we have from \eqref{th1}
\begin{equation*}
S_k(n) = 1 + \sum_{j=0}^{k} j! \binom{n-1}{j+1} \genfrac{\{}{\}}{0pt}{}{k}{j}_{2},
\end{equation*}
where
\begin{equation*}
\genfrac{\{}{\}}{0pt}{}{k}{j}_{2} = \frac{1}{j!} \sum_{i=0}^{j} (-1)^{j-i} \binom{j}{i} (i+2)^k.
\end{equation*}
Analogously, for $r=2$, we have from \eqref{th2}
\begin{equation*}
S_k(n) = -\delta_{k,0} + (-1)^{k+1} (1+ 2^k ) + \sum_{j=0}^{k} j! \binom{n+3}{j+1} \genfrac{\{}{\}}{0pt}{}{k}{j}_{-2},
\end{equation*}
where
\begin{equation*}
\genfrac{\{}{\}}{0pt}{}{k}{j}_{-2} = \frac{1}{j!} \sum_{i=0}^{j} (-1)^{j-i} \binom{j}{i} (i-2)^k.
\end{equation*}
\end{remark}

\begin{remark}
In the last section, we obtain a more general formula for $S_k(n)$ involving the numbers $\genfrac{\{}{\}}{0pt}{}{k}{j}_{n+m}$ and $\genfrac{\{}{\}}{0pt}{}{k}{j}_{n-m}$, where $m$ is any given non-negative integer (see equations \eqref{con1} and \eqref{con2}). Furthermore, we provide another formula for $S_k(n)$ involving the numbers $\genfrac{\{}{\}}{0pt}{}{k}{j}_{k-j}$ (see equation \eqref{con3}), as well as a formula for the Bernoulli polynomials $B_k(x-1)$ in terms of $\genfrac{\{}{\}}{0pt}{}{k}{j}_{x}$ and the harmonic numbers (see equation \eqref{ber1}).
\end{remark}

\section{Proof of Theorem \ref{th:1}}

The proof of Theorem \ref{th:1} is based on the following lemma.

\begin{lemma}
For $x$ a real or complex variable, let $S_k(x)$ denote the unique interpolating polynomial in $x$ of degree $k+1$ such that $S_k(x) = 1^k + 2^k + \cdots + x^k$ whenever $x$ is a positive integer (with $S_k(0) =0$). Then
\begin{equation}\label{lem1}
S_k(x) = S_k(a-1) + \sum_{j=0}^{k} j! \binom{x+1-a}{j+1} R_{k,j}(a),
\end{equation}
where $a$ is a parameter taking any arbitrary but fixed real or complex value.
\end{lemma}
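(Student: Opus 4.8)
The plan is to use the Newton--Gregory (Newton forward-difference) expansion of the interpolating polynomial $S_k(x)$ about the base point $x = a-1$, together with the known effect of the forward-difference operator on the polynomial associated with sums of powers. Concretely, since $S_k(x)$ has degree $k+1$, Newton's forward-difference formula with step $1$ based at $a-1$ gives
\begin{equation*}
S_k(x) = \sum_{j=0}^{k+1} \binom{x-(a-1)}{j}\, \Delta^j S_k(a-1) = \sum_{j=0}^{k+1} \binom{x+1-a}{j}\, \Delta^j S_k(a-1),
\end{equation*}
where $\Delta f(t) = f(t+1) - f(t)$. The $j=0$ term is $S_k(a-1)$, matching the leading term on the right-hand side of \eqref{lem1}; so the task reduces to identifying $\Delta^j S_k(a-1)$ for $j \geq 1$.

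First I would compute $\Delta S_k(x)$. From the defining relation $S_k(x) = S_k(x-1) + x^k$ valid for positive integers, hence as a polynomial identity, we get $\Delta S_k(x) = S_k(x+1) - S_k(x) = (x+1)^k$. Iterating, $\Delta^j S_k(x) = \Delta^{j-1}\big[(x+1)^k\big]$ for $j \geq 1$; that is, $\Delta^j S_k(x) = \big(\Delta^{j-1} p\big)(x)$ where $p(x) = (x+1)^k$. Now I need $\big(\Delta^{j-1} p\big)(a-1) = \sum_{i=0}^{j-1} (-1)^{j-1-i}\binom{j-1}{i} p(a-1+i) = \sum_{i=0}^{j-1}(-1)^{j-1-i}\binom{j-1}{i}(a+i)^k$. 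Comparing with \eqref{rsn}, this is exactly $(j-1)!\,\genfrac{\{}{\}}{0pt}{}{k}{j-1}_{a}$ when $a$ is a non-negative integer, and since both sides are polynomials in $a$ of degree $k$, the identity $\Delta^j S_k(a-1) = (j-1)!\,R_{k,j-1}(a)$ holds for all real or complex $a$ — here I use that $R_{k,j-1}(a)$ is the polynomial in $a$ agreeing with $\genfrac{\{}{\}}{0pt}{}{k}{j-1}_{a}$ at non-negative integers, as established in the introduction. Reindexing $j \to j+1$ then turns the Newton expansion into
\begin{equation*}
S_k(x) = S_k(a-1) + \sum_{j=0}^{k} \binom{x+1-a}{j+1}\, j!\, R_{k,j}(a),
\end{equation*}
which is precisely \eqref{lem1}.

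There is one bookkeeping point worth stating carefully rather than a genuine obstacle: one must justify reducing the polynomial identity \eqref{lem1} to its values at non-negative integers, and one must confirm the degree count so that Newton's expansion truncates at $j = k+1$ (equivalently, that $\Delta^{k+2} S_k \equiv 0$, which follows since $\Delta S_k(x) = (x+1)^k$ has degree $k$, so $S_k$ has degree $k+1$). The one mild subtlety is the appeal to the polynomial-identity principle to pass from $a \in \mathbb{Z}_{\geq 0}$ (where $R_{k,j}(a) = \genfrac{\{}{\}}{0pt}{}{k}{j}_a$ and formula \eqref{rsn} applies literally) to arbitrary real or complex $a$; this is legitimate because both sides of the claimed identity $\Delta^{j+1}S_k(a-1) = j!\,R_{k,j}(a)$ are polynomials in $a$ of degree at most $k$ that agree on infinitely many points. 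Everything else is a routine manipulation of finite differences and binomial coefficients, so I expect the proof to be short, with the main content being the clean computation $\Delta S_k(x) = (x+1)^k$ and the recognition of the iterated difference of $(x+1)^k$ as a generalized Stirling number via \eqref{rsn}.
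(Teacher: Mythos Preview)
Your proposal is correct and follows essentially the same route as the paper: apply the Newton--Gregory expansion to $S_k(x)$, use $\Delta S_k(x)=(x+1)^k$, and identify the iterated differences with $j!\,R_{k,j}(a)$. The only cosmetic differences are that the paper derives $\Delta S_k(x)=(x+1)^k$ via the Bernoulli-polynomial representation of $S_k(x)$ rather than directly from the recursion, expands about the base point $a$ and then shifts $a\to a-1$ rather than expanding at $a-1$ from the outset, and cites the identity $R_{k,j}(x)=\tfrac{1}{j!}\Delta^j x^k$ from the literature instead of recovering it from \eqref{rsn} by polynomial interpolation as you do.
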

\begin{proof}
As is well known (see, e.g., \cite[Equation (15]{atlan}), $S_k(x)$ can be expressed in terms of the Bernoulli polynomials $B_k(x)$ as follows
\begin{equation}\label{pf1}
S_k(x) = \frac{1}{k+1} [B_{k+1}(x+1) - B_{k+1}(1)].
\end{equation}
Recall further that the forward difference operator $\Delta$ acting on the function $f(x)$ is defined by $\Delta f(x) = f(x+1) - f(x)$. Then, the following elementary result
\begin{equation}\label{lem2}
\Delta S_k(x) = (x+1)^k
\end{equation}
follows immediately from \eqref{pf1} and the difference equation $\Delta B_{k+1}(x) = (k+1) x^k$ \cite[Equation (12)]{atlan}.

On the other hand, the Newton-Gregory expansion of the function $f(x)$ is given by (see, e.g., \cite[Equation (A.9), p. 230]{gould})
\begin{equation*}
f(x) = \sum_{j=0}^{\infty} \binom{x-a}{j} \Delta^j f(a),
\end{equation*}
where, for any integer $j \geq 1$, the $j$-th order difference operator $\Delta^j$ is defined by $\Delta^j f(x) = \Delta (\Delta^{j-1} f(x)) = \Delta^{j-1} (\Delta f(x))$ and $\Delta^0 f(x) = f(x)$, and where $\Delta^j f(a) = \Delta^j f(x)|_{x=a}$. Hence, applying the Newton-Gregory expansion to the power sum polynomial $S_k(x)$ and using \eqref{lem2} yields
\begin{equation}\label{pf2}
S_k(x) = S_k(a) + \sum_{j=0}^{k} \binom{x-a}{j+1} \Delta^j (a+1)^k,
\end{equation}
where the terms in the summation with index $j$ greater than $k$ have been omitted because \linebreak $\Delta^j (x+1)^k =0$ for all $j \geq k+1$ \cite[Equation (6.16), p. 68]{gould}.

The connection between \eqref{pf2} and the generalized Stirling numbers $R_{k,j}(x)$ stems from the fact that (see, e.g., \cite[Theorem 29]{broder} and \cite[Equation (3.8)]{carlitz})
\begin{equation}\label{pf3}
R_{k,j}(x) = \frac{1}{j!} \Delta^j x^k.
\end{equation}
Therefore, combining \eqref{pf2} and \eqref{pf3}, and making $a \to a-1$, we get \eqref{lem1}.
\end{proof}

When $x$ and $a$ are the non-negative integers $n$ and $r$, respectively, \eqref{lem1} becomes
\begin{equation}\label{rah1}
S_k(n) = S_k(r-1) + \sum_{j=0}^{k} j! \binom{n+1-r}{j+1} \genfrac{\{}{\}}{0pt}{}{k}{j}_r ,
\end{equation}
with $S_k(-1) =0$ for all $k \geq 1$, and $S_0(-1) = -1$. Now, letting $n=0$ in \eqref{rah1} and using the relation
\begin{equation}\label{relt}
\binom{-x}{k} = (-1)^{k} \binom{x+k-1}{k}
\end{equation}
gives
\begin{equation}\label{rminus}
S_k(r-1) = \sum_{j=0}^{k} (-1)^j j! \binom{r+j-1}{j+1} \genfrac{\{}{\}}{0pt}{}{k}{j}_r.
\end{equation}
Then, substituting \eqref{rminus} in \eqref{rah1}, we get \eqref{th1}.

Moreover, making the transformation $r \to -r$ in \eqref{rah1} and invoking the symmetry property of the power sum polynomial (see, e.g., \cite[Theorem 10]{newsome})
\begin{equation*}
S_k(-r-1) = -\delta_{k,0} + (-1)^{k+1} S_k(r),
\end{equation*}
it follows that
\begin{equation}\label{rah2}
S_k(n) = -\delta_{k,0} - (-1)^{k} S_k(r) + \sum_{j=0}^{k} j! \binom{n+1+r}{j+1} \genfrac{\{}{\}}{0pt}{}{k}{j}_{-r},
\end{equation}
from which, upon setting $n=0$, we further obtain
\begin{equation}\label{rah3}
(-1)^k S_k(r) = -\delta_{k,0} + \sum_{j=0}^{k} j! \binom{r+1}{j+1} \genfrac{\{}{\}}{0pt}{}{k}{j}_{-r}.
\end{equation}
Finally, substituting \eqref{rah3} in \eqref{rah2}, we get \eqref{th2}.

We conclude this section with the following two remarks.

\begin{remark}
By renaming $r$ as $n$ in \eqref{rah3} we find that
\begin{equation*}
S_k(n) = (-1)^k \Bigg( -\delta_{k,0} + \sum_{j=0}^{k} j! \binom{n+1}{j+1} \genfrac{\{}{\}}{0pt}{}{k}{j}_{-n} \Bigg),
\end{equation*}
which may be compared with \eqref{f1}.
\end{remark}

\begin{remark}
It is to be noted that the equation \eqref{rah1} above is equivalent to the equation appearing in \cite[Corollary 2.2]{rahm} in which $d=1$ and $a=r$.
\end{remark}

\section{Concluding remarks}

Let us observe that, by letting $r=n+m$ in \eqref{th1}, where $m$ is any given non-negative integer, and using \eqref{relt}, we obtain
\begin{equation}\label{con1}
S_k(n) = \sum_{j=0}^{k} (-1)^j j! \left[\binom{n+m+j-1}{j+1} - \binom{m+j-1}{j+1} \right]
\genfrac{\{}{\}}{0pt}{}{k}{j}_{n+m}.
\end{equation}
Of course, \eqref{con1} reduces to \eqref{reqn} and \eqref{reqn1} when $m=0$ and $m=1$, respectively. Similarly, putting $r=n-m$ in \eqref{th1}, where $m$ is any given non-negative integer, we obtain
\begin{equation}\label{con2}
S_k(n) = \sum_{j=0}^{k} j! \left[\binom{m+1}{j+1} + (-1)^j \binom{n+j-m-1}{j+1} \right]
\genfrac{\{}{\}}{0pt}{}{k}{j}_{n-m}.
\end{equation}
Note that, when $m =n$, \eqref{con2} reduces to \eqref{f1}.

On the other hand, applying the formula $\genfrac{\{}{\}}{0pt}{}{k}{j}_{-r} = (-1)^{k-j} \genfrac{\{}{\}}{0pt}{}{k}{j}_{r-j}$ (see \cite[Equation (2.4)]{ma}) and taking $r =k$ in \eqref{th2} yields
\begin{equation}\label{con3}
S_k(n) = \sum_{j=0}^{k} (-1)^{k-j} j! \left[\binom{n+k+1}{j+1} - \binom{k+1}{j+1} \right]
\genfrac{\{}{\}}{0pt}{}{k}{j}_{k-j}.
\end{equation}
Incidentally, for $n=1$, \eqref{con3} gives us the identity
\begin{equation*}
\sum_{j=0}^{k} (-1)^{k-j} j! \binom{k+1}{j} \genfrac{\{}{\}}{0pt}{}{k}{j}_{k-j} =1.
\end{equation*}

Moreover, from \eqref{pf1} and \eqref{rminus}, we obtain the following formula for the Bernoulli polynomials evaluated at the non-negative integer $r$
\begin{equation*}
B_{k+1}(r) = B_{k+1}(1) + (k+1) \sum_{j=0}^k (-1)^j j! \binom{r+j-1}{j+1} \genfrac{\{}{\}}{0pt}{}{k}{j}_r,
\quad \text{integer} \,\, r \geq 0.
\end{equation*}
Likewise, making $r \to -r$ in the preceding equation and using \eqref{relt} gives the following formula for the Bernoulli polynomials evaluated at the negative integer $-r$
\begin{equation*}
B_{k+1}(-r) = B_{k+1}(1) - (k+1) \sum_{j=0}^k j! \binom{r+1}{j+1} \genfrac{\{}{\}}{0pt}{}{k}{j}_{-r},
\quad \text{integer} \,\, r \geq 0.
\end{equation*}
One can naturally extend the above formula for $B_{k+1}(r)$ to apply to any real or complex variable $x$ as follows
\begin{equation*}
B_{k+1}(x) = B_{k+1}(1) + (k+1) \sum_{j=0}^k (-1)^j j! \binom{x+j-1}{j+1} \genfrac{\{}{\}}{0pt}{}{k}{j}_x,
\end{equation*}
where, using the notation in \cite{matsu}, $\genfrac{\{}{\}}{0pt}{}{k}{j}_x$ refers to the Stirling polynomial of the second kind $R_{k,j}(x)$, namely
\begin{equation*}
\genfrac{\{}{\}}{0pt}{}{k}{j}_x = \frac{1}{j!} \sum_{i=0}^{j} (-1)^{j-i} \binom{j}{i} (i+x)^k.
\end{equation*}

Finally, we point out that $B_{k}(x-1)$ can be expressed in the form
\begin{equation}\label{ber1}
B_{k}(x-1) = \sum_{j=0}^k (-1)^j j! H_{j+1} \genfrac{\{}{\}}{0pt}{}{k}{j}_{x},
\end{equation}
where $H_j = 1 + \frac{1}{2} + \cdots + \frac{1}{j}$ is the $j$-th harmonic number. In particular, setting $x=1$ in \eqref{ber1} yields the following known formula for the Bernoulli numbers (see, e.g., \cite[Equation (5.9)]{wang})
\begin{equation*}
B_{k} = \sum_{j=0}^k (-1)^j j! H_{j+1} \genfrac{\{}{\}}{0pt}{}{k+1}{j+1}.
\end{equation*}
Furthermore, from \eqref{pf1} and \eqref{ber1}, we arrive at the following formula for the sum of powers of integers
\begin{equation*}
S_{k-1}(n) = \frac{1}{k} \sum_{j=0}^k (-1)^j j! H_{j+1} \left( \genfrac{\{}{\}}{0pt}{}{k}{j}_{n+2}
\! - \genfrac{\{}{\}}{0pt}{}{k}{j}_{2} \right),
\end{equation*}
which holds for any integers $n \geq 0$ and $k \geq 1$, with $S_{k-1}(0) =0$ for all $k \geq 1$.

\end{document}